\newtheorem{thm}{Theorem}
\theoremstyle{definition}
\tikzstyle{mybox} = [draw=black, fill=white,  thick,
\tikzstyle{mybox} = [draw=black, fill=white,  thick,
\begin{document}

\title{Solving Cubic Equations By the Quadratic Formula}

\author{Bahman Kalantari \\
Department of Computer Science, Rutgers University, NJ\\
kalantari@cs.rutgers.edu
}
\date{}
\maketitle

\begin{abstract}
Let $p(z)$ be a monic cubic complex polynomial with distinct roots and distinct critical points.  We say a critical point has the {\it Voronoi property} if it lies in the
Voronoi cell of a root $\theta$, $V(\theta)$, i.e. the set of points that are closer to $\theta$ than to the other roots.  We prove at least one critical point has the Voronoi property and
characterize the cases when both satisfy this property. It is known that for any $\xi \in V(\theta)$,  the sequence $B_m(\xi) =\xi - p(\xi) d_{m-2}/d_{m-1}$  converges to $\theta$, where $d_m$ satisfies the recurrence $d_m =p'(\xi)d_{m-1}-0.5 p(\xi)p''(\xi)d_{m-2} +p^2(\xi)d_{m-3}$, $d_0 =1, d_{-1}=d_{-2}=0$.
Thus by the Voronoi  property, there is a solution $c$ of $p'(z)=0$ where  $B_m(c)$ converges to a root  of $p(z)$. The speed of convergence is dependent on the ratio of the distances between $c$ and the closest and the second closest roots of $p(z)$. This results in a different algorithm for solving a cubic equation than the classical methods.    We give polynomiography for an example.
\end{abstract}

{\bf Keywords:} Complex Polynomial, Iterative Methods, Gauss-Lucas, Recurrence, Voronoi Cell, Polynomiography.

\section{Introduction} Solving the cubic equation $p(z)=z^3+a_2z^2+a_1z+a_0=0$ with real or complex coefficients has played a
significant role in the history and development of many areas of mathematics, in particular the discovery of the complex numbers, see e.g. Mazur \cite{Maz}. There are numerous articles on solving a cubic equation,
many restricted to the case of real coefficients.  The development of closed formulas for  solutions in terms of radicals of the coefficients is one of the triumphs in algebra,  albeit the algebraic formulas are quite cumbersome.

Historically, solving a cubic equation received much attention from the greatest of past mathematicians, for example Omar Khayy$\acute{{\rm a}}$m's geometric method for solving some special cases, and of course Cardano's solution, see e.g. Stewart \cite{Ian}.  From the point of view of numerical approximation, even the high school quadratic formula is not practical when the solutions require taking square-roots.  Fortunately, approximation of solutions to a quadratic equation,  in real or complex coefficients,  can be achieved very efficiently, e.g. via Newton's method.   Cayley \cite{cay}, is among the first to have examined Newton's method for complex polynomials. In particular, he analyzed Newton's iterations for a quadratic polynomial, describing their behavior, starting from an arbitrary point in the complex plane. He showed that the basin of attraction of each root is its Voronoi cell.

Cayley attempted to characterize the behavior of Newton's method for the roots of unity but was unable to characterize convergence behavior even for cubic roots.  In the case of $z^3-1=0$,  the behavior of Newton's method was not well-understood until the advent of computers.
As we now know, the basins of attraction have fractal boundary, known as Julia set.  Indeed, even solving a cubic polynomial via Newton's method is not completely settled today.  For instance, it is not known if one can characterize points belonging to the Julia set of Newton's function for $z^3-1$.  Using a real number model of computation, Blum et al. \cite{Blum} proved that given an arbitrary input, and a general cubic
polynomial, to test if the corresponding orbit under Newton's iterations would converge to a root, is undecidable. It is also known there are cubic polynomials, such as $z^3-2z+2$, where Newton's iterates fail to converge to a root for inputs in a set of positive measure.  Given a cubic equation in a normalized form, McMullen \cite{McMullen87} described a rational iteration function where the convergence of the fixed point iterates to a root is assured for any input, expect for the Julia set which is a set of measure zero in this case. A much more complicated result in \cite{McMullen87} implies that for polynomials of degree four or higher there is no {\it generally convergent} rational iteration function. Thus the failure of Newton's method is not an exception.

In this article we prove a property of cubic polynomials in relation to its critical points, interesting in its own right, but also giving rise to a new algorithm for solving a cubic equation. We prove, when both the roots and critical points are distinct,  there exists a critical point $c$ that is closer to a root $\theta$  than to the other roots. We also characterize the cases where both critical points have this {\it Voronoi property}.  Using the Voronoi property, we then describe a sequence, computable in terms of $c$, converging to $\theta$.
By the deflation method, the other roots can be easily approximated.

\section{A Voronoi Property of Cubic Polynomials}

 Given a set of points  $S= \{\theta_1, \dots, \theta_n\}$ in the Euclidean plane, we identify them as complex numbers.  The {\it Voronoi cell} of a particular point $\theta$ in $S$, denoted by $V(\theta)$, is the set of all points in the  plane that are closer to $\theta$ than to any other point in $S$. Each Voronoi cell is an open polygonal set, possibly unbounded.  Voronoi cells of  $\theta_i$'s together with their boundaries partition the plane into disjoint sets, known as Voronoi diagram. For a survey on Voronoi diagrams, see \cite{arun91}. We say a critical point $c$ of $p(z)$ has the {\it Voronoi property} is $c \in V(\theta)$, for some root $\theta$ of $p(z)$. In this section we first prove the followng.

\begin{thm} \label{thm1} {\rm (Voronoi Property)} Let $p(z)$ be a cubic complex polynomial having distinct roots and distinct critical points. Then at least one critical point has the Voronoi property.
\end{thm}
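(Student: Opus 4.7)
The plan is to invoke Marden's theorem---the two critical points of a cubic with three non-collinear roots are the foci of the Steiner inellipse of the triangle formed by the roots---together with a separate elementary argument when the roots are collinear. A critical point fails the Voronoi property exactly when it is equidistant to at least two roots, i.e., lies on the perpendicular bisector of some pair of roots; I will show both critical points cannot fail simultaneously.

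\emph{Non-collinear case.} Suppose for contradiction that $c_{1}$ and $c_{2}$ both lie on perpendicular bisectors of pairs of roots. The Steiner inellipse is tangent to side $\alpha\beta$ of the root triangle at its midpoint $M=(\alpha+\beta)/2$, so the perpendicular bisector of $\alpha\beta$ is the normal to the ellipse at $M$. The optical reflection property of the ellipse at $M$ says that the focal chords $Mc_{1}$ and $Mc_{2}$ are mirror images across this normal; hence if one focus lies on the perpendicular bisector of $\alpha\beta$, so does the other. If $c_{1}$ and $c_{2}$ lay on two different bisectors of pairs of roots, applying this observation at each of the two corresponding midpoints would put both foci on two distinct bisectors, forcing each to equal the circumcenter and yielding $c_{1}=c_{2}$, a contradiction. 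So both $c_{1}$ and $c_{2}$ lie on a single perpendicular bisector, say that of $\alpha\beta$. Their midpoint is the centroid $G=(\alpha+\beta+\gamma)/3$, which must then also lie on this bisector; and a short computation (equivalently, equality of the medians from $\alpha$ and $\beta$) shows that $|G-\alpha|=|G-\beta|$ is equivalent to $|\gamma-\alpha|=|\gamma-\beta|$. Thus the root triangle is isoceles.

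\emph{Isoceles subcase.} After an affine change of variables I may take $\alpha=-1$, $\beta=1$, $\gamma=hi$ with $h>0$. Solving $p'(z)=3z^{2}-2hi\,z-1=0$ gives critical points $c=i(h\pm\sqrt{h^{2}-3})/3$ when $h\geq\sqrt{3}$, both on the perpendicular bisector of $\alpha\beta$, and $c=(\pm\sqrt{3-h^{2}}+hi)/3$ when $0<h<\sqrt{3}$, off this bisector. In each subcase a direct comparison of $|c-\alpha|$, $|c-\beta|$, $|c-\gamma|$ shows that at least one of the two critical points is strictly closer to a single root than to the other two, contradicting the assumption that both fail the Voronoi property.

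\emph{Collinear case.} After an affine change of variables take the roots real with $\theta_{1}<\theta_{2}<\theta_{3}$. Rolle's theorem gives $c_{1}\in(\theta_{1},\theta_{2})$, $c_{2}\in(\theta_{2},\theta_{3})$, and since $|c_{1}-\theta_{3}|>|c_{1}-\theta_{2}|$ the only way $c_{1}$ can fail is $c_{1}=(\theta_{1}+\theta_{2})/2$; similarly $c_{2}=(\theta_{2}+\theta_{3})/2$. Adding these and using $c_{1}+c_{2}=\tfrac{2}{3}(\theta_{1}+\theta_{2}+\theta_{3})$ forces $\theta_{2}=(\theta_{1}+\theta_{3})/2$; but then rescaling to $\theta_{1}=-1,\theta_{2}=0,\theta_{3}=1$ gives $p(z)=z^{3}-z$ with critical points $\pm 1/\sqrt{3}\neq\pm 1/2$, a contradiction. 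The step I expect to be the chief obstacle is the distance comparison in the isoceles subcase: the two foci coalesce at the excluded transition $h=\sqrt{3}$, so one must confirm that the required strict inequality persists throughout both ranges $h>\sqrt{3}$ and $0<h<\sqrt{3}$.
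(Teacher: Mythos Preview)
Your argument is correct and reaches the same endpoint as the paper---the isoceles configuration with apex height $h>\sqrt{3}$, followed by the distance inequality that the paper records as $(2b-\sqrt{b^{2}-3})^{2}<9+(b-\sqrt{b^{2}-3})^{2}$, equivalently $b^{4}-2b^{2}-3>0$---but the reduction to that endpoint is genuinely different. The paper normalises so that some failing critical point is equidistant to $\pm1$, then uses the algebraic relation $w=(3c^{2}-1)/(2c)$ coming from $p'(c)=0$ to force $w$ purely imaginary, and Vieta ($3c_{1}c_{2}=-1$) to force the second critical point onto the imaginary axis as well; this immediately gives $b>\sqrt{3}$. You instead invoke Marden's theorem and the optical property of the Steiner inellipse at a midpoint to show that if one focus lies on a perpendicular bisector then so does the other, which rules out the foci sitting on two different bisectors and forces the isoceles picture via the centroid. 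Your route is more geometric and explains structurally why the extremal case is isoceles; the paper's route is more elementary (no Marden, no ellipse reflection law) and gets there in two lines of algebra. Note also that your case $0<h<\sqrt{3}$ is actually vacuous under your own reduction---you have already placed both foci on the bisector of $\alpha\beta$, and for $h<\sqrt{3}$ the foci have nonzero real part---so the ``chief obstacle'' you flag is only the $h>\sqrt{3}$ inequality, which factors as $(b^{2}-3)(b^{2}+1)>0$ and is immediate. Finally, your explicit Rolle-based treatment of the collinear case is a genuine improvement over the paper, which waves this off as a limiting argument.
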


\begin{proof} Assume no critical point has the Voronoi property.
It is easy to argue that when the roots of two cubic polynomials form  similar triangles, the relative location of their critical points will remain unchanged. Thus, without loss of generality we assume the roots of $p(z)$ are $1$, $-1$, and $w=a+ib$,  $i=\sqrt{-1}$, where $a \geq 0$ and $b \geq 0$. We assume that one of the critical points  is equidistant to $1$ and $-1$.  We will assume $b >0$ so that the roots of $p(z)$ are not collinear. The proof for the collinear case follows as a limiting case of the general case and will be omitted. With the roots as $-1,1,w$ we have
\begin{equation} \label{eq1}
p(z)=(z^2-1)(z-w)=z^3-wz^2-z+w.
\end{equation}
The critical points are the solutions to $p'(z)=3z^2-2wz-1=0$:
\begin{equation} \label{eq2}
c_1= \frac{1}{3}(w + \sqrt{w^2+3}), \quad c_2 = \frac{1}{3}(w - \sqrt{w^2+3}).
\end{equation}
Neither one can be zero. Note that the critical points are distinct if and only if $w \not =i\sqrt{3}$.
From $p'(c_j)=0$, we get
\begin{equation} \label{eq3}
w=\frac{3c_j^2-1}{2c_j}, \quad j=1,2.
\end{equation}
Since the critical points cannot be zero, the critical points that is equidistant to $-1$ and $1$ is purely imaginary.
This fact and the above implies that $w$ is also purely imaginary. So
$w=ib$.  From the above, we also get $3c_1c_2=-1$. Thus,  if one critical point is purely imaginary, so is the other. We claim $c_1 \in V(w)$. Since $c_1$ is imaginary, $w^2+3=-b^2+3 <0$. Thus $b > \sqrt{3}$. We have $c_1= i(b + \sqrt{b^2-3})/3$, $c_2 = i(b - \sqrt{b^2-3})/3$.  Since $b > \sqrt{3}$, $c_1$ and $c_2$ lie between $0$ and $w$, and $c_1$ lies between $c_2$ and $w$ (See middle triangle in Figure \ref{fig1}).  To prove
$c_1 \in V(w)$, we need to show the distance between $c_1$ and $w$ is less than the distance between $c_1$ and $-1$. Equivalently,  we need to show
\begin{equation}\label{eq4}
\frac{1}{9}(2b - \sqrt{b^2-3})^2  < 1+ \frac{1}{9}(b - \sqrt{b^2-3})^2.
\end{equation}
Simplifying the above, it is equivalent to showing $3+ 2b^2 <b^4$. The roots of the quadratic $q(x)=x^2-2x-3$ are $3,-1$. Hence for $x > 3$, $q(x) >0$.  But if $x=b^2$, $b^2 >3$. Hence the proof.
\end{proof}

In the next theorem we give a stronger result that classifies the cases where both critical points have the Voronoi property.  Without loss of generality we assume the roots and critical points are as in the previous theorem. We exclude the case of $w=a+ib$, $a=0$, $b > \sqrt{3}$ which was proved in the previous theorem.

\begin{thm} \label{thm2} {\rm (Strong Voronoi Property)} Let $p(z)$ be a cubic complex polynomial having distinct roots $\theta_1=-1$, $\theta_2=1$, $\theta_3=w=a+ib$, $a \geq 0$, $b > 0 $. Its critical points are $c_1= \frac{1}{3}(w + \sqrt{w^2+3})$, and $c_2 = \frac{1}{3}(w - \sqrt{w^2+3})$ and satisfy

(i) If  $a >0$, then $c_2 \in V(-1)$.

(ii) If  $a=0$, $b < \sqrt{3}$, then $c_2 \in V(-1)$ and $c_1 \in V(1)$.
\end{thm}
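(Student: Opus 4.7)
The plan is to reduce each assertion of the form ``$c_j \in V(\theta_k)$'' to the two defining pairwise distance inequalities between $c_j$ and the roots $-1$, $1$, $w$, and then to exploit a single algebraic identity, coming from $p'(c_j) = 0$, that eliminates $w$ from the hardest comparison. Specifically, the relation $3 c_j^2 - 2 w c_j - 1 = 0$ rewrites as
\[
2 c_j (c_j - w) \;=\; 1 - c_j^2 \;=\; -(c_j - 1)(c_j + 1),
\]
from which $|c_j - w| = |1 - c_j|\,|1 + c_j|/(2 |c_j|)$. Hence $|c_j + 1| < |c_j - w|$ collapses to the Apollonius-type condition $|1 - c_j| > 2 |c_j|$, and symmetrically $|c_j - 1| < |c_j - w|$ collapses to $|1 + c_j| > 2 |c_j|$. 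These inequalities involve only the critical point, not the third root $w$.

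For part (i), with $a > 0$, I would write $\sqrt{w^2 + 3} = u + i v$ with $u > 0$, and then $2 u v = \operatorname{Im}(w^2 + 3) = 2 a b \geq 0$ forces $v \geq 0$. The inequality $|c_2 + 1| < |c_2 - 1|$ is equivalent to $\operatorname{Re}(c_2) < 0$, i.e.\ $u > a$; this follows by combining $u^2 - v^2 = a^2 - b^2 + 3$ and $u^2 + v^2 = |w^2 + 3|$, solving for $u^2$, and reducing after one squaring to $12 a^2 > 0$. For the harder inequality $|1 - c_2| > 2 |c_2|$, substituting $c_2 = (w - \zeta)/3$ with $\zeta = u + iv$ yields the equivalent condition $|w + 1 - \zeta| < 2$. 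The elementary identity $(w + 1 - \zeta)(w + 1 + \zeta) = (w + 1)^2 - \zeta^2 = 2 (w - 1)$ converts this into $|w + 1 + \zeta| > |w - 1|$, and a direct expansion gives
\[
|w + 1 + \zeta|^2 - |w - 1|^2 \;=\; 4 a + 2 u + 2 (a u + b v) + u^2 + v^2,
\]
which is a sum of nonnegative terms and is strictly positive because $u > 0$ whenever $a > 0$.

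For part (ii), with $a = 0$ and $b < \sqrt{3}$, the radical becomes real: $\zeta = \sqrt{3 - b^2}$, so $c_1 = (\sqrt{3 - b^2} + i b)/3$ and $c_2 = -\overline{c_1}$. I would first observe that the involution $z \mapsto -\bar z$ fixes the root $i b$, interchanges $\pm 1$, and swaps $c_1$ with $c_2$; hence $c_1 \in V(1)$ if and only if $c_2 \in V(-1)$, reducing the two assertions to one. The inequality $\operatorname{Re}(c_2) < 0$ is immediate. For the remaining inequality, combining $c_1 c_2 = -1/3$ with $c_2 = -\overline{c_1}$ gives the clean identity $|c_2|^2 = -c_1 c_2 = 1/3$, and a short expansion yields
\[
|c_2 - 1|^2 \;=\; \frac{4}{3} + \frac{2}{3}\sqrt{3 - b^2} \;>\; \frac{4}{3} \;=\; 4 |c_2|^2,
\]
where the strict inequality uses $b < \sqrt{3}$.

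The main obstacle is the hard comparison with $w$ in part (i): a direct attack on $|c_2 + 1|^2 < |c_2 - w|^2$ produces a quartic polynomial in $a, b, u, v$ whose positivity is not transparent. The entire argument hinges on invoking $p'(c_2) = 0$ at the outset to replace this by the $w$-free Apollonius condition $|1 - c_2| > 2 |c_2|$, after which the factorization $(w + 1)^2 - \zeta^2 = 2(w - 1)$ reduces the problem to verifying a manifestly nonnegative sum.
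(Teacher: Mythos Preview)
Your proof is correct and follows a genuinely different route from the paper's. Both arguments handle $\operatorname{Re}(c_2)<0$ the same way (the paper's inequality~(\ref{eq11}) is exactly your reduction to $12a^2>0$). They diverge on the hard comparison $|c_2+1|<|c_2-w|$. The paper expands $|c_2-w|^2-|c_2+1|^2$ directly using the explicit radicals $A,B$ for $\sqrt{w^2+3}$, arriving at the condition $a^2+b^2+2aA+2bB+2A>2a+3$, and then verifies it by a case split on the sign of $s=a^2-b^2+3$, one branch requiring a short two-variable calculus argument. You instead invoke $p'(c_2)=0$ in the form $2c_2(c_2-w)=1-c_2^2$ to eliminate $w$ altogether, reducing the comparison to the Apollonius condition $|3c_2+1|<2$; the factorization $(w+1)^2-\zeta^2=2(w-1)$ then converts this into $|w+1+\zeta|>|w-1|$, whose expansion is a transparently nonnegative sum. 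Your route thus avoids both the case split and the calculus, at the price of spotting the two algebraic identities. For part~(ii) the paper recycles its general $d_1>d_2$ computation, whereas you exploit the isometry $z\mapsto-\bar z$ together with $|c_2|^2=-c_1c_2=1/3$ to finish in one line. One minor point worth making explicit in your write-up: the stipulation ``$u>0$'' is not a free normalization but must match the branch of $\sqrt{w^2+3}$ fixed by the theorem's labeling of $c_1,c_2$; since $\operatorname{Im}(w^2+3)=2ab>0$ when $a>0$, the principal square root indeed has $u>0$, consistent with the paper's convention in~(\ref{eq6}).
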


\begin{proof}  Let $w^2+3=s+id$ with $s,d$ reals. Then
\begin{equation} \label{eq5}
s=(a^2-b^2+3), \quad d=2ab.
\end{equation}
Since $d \geq 0$, it can be shown, see e.g. \cite{Rab}, that
$\sqrt{s+id}= A +iB$, where
\begin{equation} \label{eq6}
A=\frac{1}{\sqrt{2}}{\sqrt{\sqrt{s^2+d^2}+s}}, \quad B=\frac{1}{\sqrt{2}}{\sqrt{\sqrt{s^2+d^2}-s}}.\end{equation}
Thus
\begin{equation} \label{eq7}
c_1=\frac{1}{3}{a+ A }+i \frac{1}{3}{b+B},  \quad c_2=\frac{1}{3}{a- A } +i \frac{1}{3}{b-B}.
\end{equation}

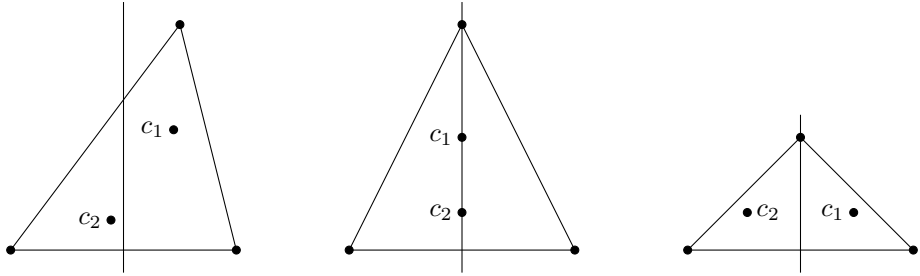
\begin{figure}[htpb]
	\centering
	
	\begin{tikzpicture}[scale=1.5]

\draw (-1.0,0.0) -- (.5,2.0) -- (1.0,0.0) -- cycle;
\draw (0.0,-0.2) -- (0,2.2) -- cycle;
\filldraw (-1,0) circle (1pt);
\filldraw (.5,2) circle (1pt);
\filldraw (1,0) circle (1pt);
\filldraw  (.4442,1.0671)  circle (1pt);
\filldraw  (-0.1109,.2662)  circle (1pt);
\draw (.4442,1.0671) node[left] {$c_1$};
\draw  (-0.1109,.2662) node[left] {$c_2$};

\draw (2.0,0.0) -- (3,2.0) -- (4.0,0.0) -- cycle;
\draw (3.0,-0.2) -- (3,2.2) -- cycle;
\filldraw (2,0) circle (1pt);
\filldraw (3,2) circle (1pt);
\filldraw (4,0) circle (1pt);
\filldraw  (3,1) circle (1pt);
\filldraw  (3,.3333) circle (1pt);
\draw (3,1) node[left] {$c_1$};
\draw (3,.3333) node[left] {$c_2$};

\draw (5.0,0.0) -- (6,1.0) -- (7,0.0) -- cycle;
\draw (6.0,-0.2) -- (6,1.2) -- cycle;
\filldraw (5,0) circle (1pt);
\filldraw (6,1.0) circle (1pt);
\filldraw (7,0) circle (1pt);
\filldraw  (6-.4714,.3333) circle (1pt);
\filldraw  (6.4714,.3333) circle (1pt);
\draw (6-.4714,.3333) node[right] {$c_2$};
\draw  (6.4714,.3333) node[left] {$c_1$};


		

	\end{tikzpicture}
\caption{From left to right: $a >0$;  $a=0$, $b>\sqrt{3}$;  $a=0$, $b <\sqrt{3}$.} \label{fig1}
\label{Fig3Witness}
\end{figure}

Writing the distance between  $c_2$ and $w$ as $d_1$, and the distance between $c_2$ and $-1$ as $d_2$, we
will first claim  that if $b \not = \sqrt{3}$, then $d_1 > d_2$. Computing,
\begin{equation} \label{eq8}
d_1^2= \frac{1}{9}  ({2a} + {A})^2+  \frac{1}{9}  ({2b} + {B} )^2, \quad
d_2^2= \frac{1}{9}  (a - A +3)^2+ \frac{1}{9}  (-{b} + {B})^2.\end{equation}
To verify  $d_1^2 > d_2^2$, after some simplification, is equivalent to verifying
\begin{equation} \label{eq9}
a^2+b^2  + 2aA+ 2bB + 2A >  2a +3.\end{equation}
Suppose $b < \sqrt{3}$. Then,  $s>0$, and thus  $A \geq \sqrt{s}= \sqrt{3+a^2-b^2}$.  Since $bB \geq 0$, in order to prove the above inequality it suffices to argue via calculus that
\begin{equation} \label{eq10}
f(a,b)=b^2  + 2(a+1) \sqrt{3- b^2+a^2} -2a -3 > 0, \quad  0 \leq b < \sqrt{3}.\end{equation}
More precisely, for $a \geq 0$, $f(a,0) > 0$ and $f(a, \sqrt{3}) \geq 0$. Next, $\partial f(a,b)/\partial b=0$  occurs at $a^*=1-b^2/2$.  Since $a \geq 0$, it suffices to show that $g(b)=f(a^*,b)=2b^2-5+(4-2b^2) \sqrt{4-2b^2+b^4/4}$ is positive on $0 \leq b \leq \sqrt{2}$. This is trivial.

Suppose $b > \sqrt{3}$. Then to prove the desired inequality it suffices to show  $2aA + 2bB + 2A \geq 2a.$
If $s \geq 0$, $A \geq a$, and if $s <0$, $B \geq a$, so that $bB \geq a$, hence the proof of claim.

Suppose $a >0$. We claim the real part of $c_2$ is negative. This is valid  if and only if $a < A$.
Substituting  for $A$ from (\ref{eq5}) and  squaring both sides of this inequality and rearranging, we get $2a^2 -s  < \sqrt{s^2+d^2}$. Squaring both sides, simplifying  and substituting for $s$ and $d$,  we get
\begin{equation} \label{eq11}
4a^4 -4a^2(a^2-b^2 +3) < 4a^2b^2.\end{equation}
Simplifying the above gives, $- 12 a^2  b^2 < 0$, which holds true since $b >0$, and $a >0$. Combining this with  we the fact that $d_1 >d_2$, we have proved $c_2 \in V(-1)$.

By the Gauss-Lucas theorem (see e.g. \cite{She}, \cite{Kalbook}, or \cite{KalMon}), the critical points of a polynomial lie in the convex hull of its roots. Applying this to  our case, it follows that both critical points must have nonnegative imaginary part. Hence we have verified the left triangle in Figure \ref{fig1}.

Suppose $a=0$ and $b < \sqrt{3}$. Then $\sqrt{s^2+d^2}+s= |3-b^2|+ 3-b^2$. Thus  the real part of $c_2$ is negative, implying that $c_2$ is closer to $-1$ than to $1$. This together with the fact that $d_1 > d_2$ implies $c_2 \in V(-1)$. By symmetry we have $c_1 \in V(1)$. These with the Gauss-Lucas theorem  justify the right triangle in Figure \ref{fig1}. This completes the proof.
\end{proof}

\section{Approximation of Roots of Polynomials}

In this section we first review  a fundamental family of iteration functions for the approximation of roots  of a
complex polynomial $p(z)$ of degree $n$. We then consider its application in solving a cubic polynomial equation. The {\it basic family} of iteration functions is the collection
\begin{equation} \label{eq12}
B_m(z)=z-p(z) \frac{D_{m-2}(z)} {D_{m-1}(z)}, \quad m=2,3, \dots \end{equation}
where  $D_0(z)=1$,  $D_k(z)=0$ for $k <0$, and, $D_m(z)$ satisfies the recurrence
\begin{equation} \label{eq13}
D_m(z)= \sum_{i=1}^n (-1)^{i-1}p(z)^{i-1}\frac{p^{(i)}(z)}{i!}D_{m-i}(z).\end{equation}

The first two members are, $B_2(z)$ (Newton) and $B_3(z)$ (Halley) iteration functions.  For the rich  history of individual members, their discovery,  many equivalent formulations of the basic family members, and many other properties and applications, see \cite{Kalbook}.  It can be shown that for each fixed $m\geq 2$, there exists a disk centered at a root $\theta$ such that for any $z_0$ in this disk the sequence of fixed point iteration
$z_{k+1}=B_m(z_k)$, $k=0,1,\dots$, is well-defined, and converges to $\theta$. If  $\theta$ is a simple
root,  the order of convergence is $m$.

In contrast to using individual members of the basic family, there is a collective application, using the {\it basic sequence}, $\{B_m(w), m=2, \dots\}$, for some fixed $w$, (see \cite{Kalbook}).
The following theorem describes a pointwise convergence property on each Voronoi cell $V(\theta)$. For a proof of pointwise convergence  see \cite{Kalbook}, and for proof of uniform convergence of the basic family, see \cite{KalDCG}.

\begin{thm} \label{thm3} For any root $\theta$ of $p(z)$, and any $w \in V(\theta)$,
\begin{equation}
\lim_{m \to \infty} B_m(w)=\theta. \qed
\end{equation}
\end{thm}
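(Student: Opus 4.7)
My plan is to derive a closed partial-fraction expression for $D_m(z)$ via a generating-function identity and then deduce convergence from a dominant-term argument driven by the defining inequality of $V(\theta)$.

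First I would consider $1/p(z+t)$ as a formal power series in $t$. Inverting $p(z+t)/p(z)=1+\sum_{i\geq 1}\frac{p^{(i)}(z)}{i!\,p(z)}t^i$ produces a power series whose coefficients satisfy precisely the convolution relation encoded in recurrence (\ref{eq13}) with the stated initial data; identifying these coefficients gives
\[
\frac{1}{p(z+t)} = \sum_{m=0}^{\infty} (-1)^m \frac{D_m(z)}{p(z)^{m+1}}\,t^m.
\]
On the other hand, since the roots $\theta_1,\dots,\theta_n$ of $p$ are distinct, partial fractions yield $1/p(z+t)=\sum_{i=1}^n 1/\bigl(p'(\theta_i)((z-\theta_i)+t)\bigr)$, and geometrically expanding each summand in $t$ (valid for $|t|<\min_i|z-\theta_i|$) and matching coefficients produces the key identity
\[
\frac{D_m(z)}{p(z)^{m+1}} = \sum_{i=1}^{n} \frac{1}{p'(\theta_i)\,(z-\theta_i)^{m+1}}.
\]

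Inserting this identity into (\ref{eq12}) and cancelling a factor of $p(z)$ rewrites the iteration as
\[
B_m(z) = z - \frac{\sum_{i=1}^n 1/\bigl(p'(\theta_i)(z-\theta_i)^{m-1}\bigr)}{\sum_{i=1}^n 1/\bigl(p'(\theta_i)(z-\theta_i)^{m}\bigr)}.
\]
Now fix $\theta=\theta_1$ and take $w\in V(\theta_1)$, $w\neq\theta_1$ (the case $w=\theta_1$ is immediate from $p(w)=0$). By the definition of $V(\theta_1)$ the ratios $r_i:=(w-\theta_1)/(w-\theta_i)$ satisfy $|r_i|<1$ for every $i>1$. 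Factoring the dominant term $1/\bigl(p'(\theta_1)(w-\theta_1)^k\bigr)$ out of both sums (with $k=m-1$ and $k=m$) yields
\[
B_m(w) = w - (w-\theta_1)\,\frac{1+\sum_{i>1}\bigl(p'(\theta_1)/p'(\theta_i)\bigr)\,r_i^{m-1}}{1+\sum_{i>1}\bigl(p'(\theta_1)/p'(\theta_i)\bigr)\,r_i^{m}}.
\]
Since every $|r_i|<1$, both tail sums tend to $0$, the fraction tends to $1$, and $B_m(w)\to w-(w-\theta_1)=\theta_1$; the same dominance shows $D_{m-1}(w)\neq 0$ for all sufficiently large $m$, so the iterates are well defined.

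The only delicate step is establishing the partial-fraction identity for $D_m$; once it is in hand, the rest is a routine geometric-dominance argument. The controlling ratio $\max_{i>1}|r_i|$ also governs the asymptotic rate, consistent with the abstract's remark that the speed of convergence depends on the ratio between the distances to the closest and the second-closest root.
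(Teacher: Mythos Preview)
Your argument is correct (for simple roots, which is the only case the paper actually needs). The paper itself does not supply a proof of Theorem~\ref{thm3}: it simply cites \cite{Kalbook} and closes the statement with a \qed. The only hint the paper gives about the intended argument appears in the paragraph following the theorem, where it says one should find the roots of the characteristic polynomial of the linear recurrence~(\ref{eq13}) and then write $D_m$ as a linear combination of their $m$-th powers. Your generating-function/partial-fraction derivation is exactly this program in disguise: the denominator $p(z+t)/p(z)$ of the generating function for $(D_m)_m$ has zeros at $t=\theta_i-z$, so the characteristic roots are $1/(\theta_i-z)$, and your closed form $D_m(z)/p(z)^{m+1}=\sum_i 1/\bigl(p'(\theta_i)(z-\theta_i)^{m+1}\bigr)$ is precisely the promised linear combination of their powers. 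What your route buys is that partial fractions hand you the coefficients $1/p'(\theta_i)$ for free, without having to solve for them from the initial data $D_0=1$, $D_{-1}=D_{-2}=0$; the subsequent dominance step and the identification of the rate with $\max_{i>1}|r_i|$ then match the paper's informal description verbatim. The one caveat is that your partial-fraction step assumes the roots of $p$ are simple; this is harmless in the cubic setting of the paper, but for the theorem in full generality you would need the obvious modification with higher-order poles.
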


It can be shown that when the roots of $p(z)$ are simple, the rate of converge is proportional to the ratio $r$ of the distance between $w$ and $\theta$ and the distance between $w$ and the second closest root of $p(z)$. We refer the reader for details to \cite{Kalbook}.  To formally prove this amounts to describing the roots of the characteristic polynomial of the linear homogeneous recurrence relation in (\ref{eq13}), then using well-known representation of the $m$-th term as a linear combination of the $m$-th powers of these roots.  The ratio $D_{m-1}(w)/D_m(w)$ can then be asymptotically estimated to be proportional to $r^m$.

\section{An Algorithm for Solving a Cubic Equation}

Using Theorems \ref{thm1} and \ref{thm3} we describe an algorithm for solving a cubic equation. Given a monic cubic  polynomial $p(z)$,  first compute its critical points. If $p'(z)$  has one solution, then $p(z)=(z^3-a_0)$ and the solutions can be expressed trivially.  Otherwise, let $c$, $c'$ be the two distinct critical points.  Compute the {\it interlaced basic sequence}  $\{B_2(c), B_2(c'), B_3(c), B_3(c'), \cdots,  \}$. Since by Theorem \ref{thm1} either $c$ or $c'$ would fall within the Voronoi cell of one of the roots, by Theorem \ref{thm3} at leat one of the two basic sequences, $\{B_m(c), m=2,3, \dots\}$ or $\{B_m(c'), m=2,3, \dots\}$ will be convergent to a root.  Convergence can be tested by considering the difference between two successive terms in each of the sequences. It should  be noted that according to Theorem \ref{thm3} we do not need to use the exact value of the critical points,  the interlaced basic sequence computed based on approximation to critical points will suffice. Furthermore, any approximation to a root can be improved via a few Newton or Halley iterations.  Once, we have an approximate root $r$, we can use deflation to approximate the other roots, i.e. factor out $(z-r)$ and proceed to approximate the roots of the quadratic quotient.

\section{Polynomiography of An Example}

Consider the polynomial $p(z)=z^3-2z+2$.  The roots are $-1.7693$ and $\pm .88456 +.58974 i$. The triangle of the roots is congruent to the case where the roots are $-1$, $1$ and $w=ib$, $b > \sqrt{3}$.  The critical points are $\pm \sqrt{2/3}$ and $-\sqrt{2/3}$ lies in the Voronoi region of $-1.7693$.

Here we illustrate our proposed algorithm for this polynomial  via {\it polynomiography}. Polynomioraphy is defined as the algorithmic visualization of a polynomial equation via iteration functions, see \cite{Kalbook}.  Newton's method, $z_k= N(z_{k-1})$, where $N(z)=z-p(z)/p'(z)$, fails to converge to a root for inputs in a set of positive measure. The reason is that $N(0)=1$ and $N(1)=0$ and the cycle $\{0,1\}$ is an attractive cycle.  The first image  in Figure \ref{poly} (a {\it fractal polynomiograph}) indicates this, showing the polynomiograph of Newton's method applied to this polynomial. Newton's method fails for any seed in the white area. The second image in Figure \ref{poly} (a {\it non-fractal polynomiograph}) gives the polynomiography under the iterations of the basic sequence.  The dark areas consists of points where the iterations of the basic sequence exceeds a threshold before leading to an approximation of a root.
The critical point $c=-\sqrt{2/3}$ can be shown to lie in the red region in the right-hand side image.

\begin{figure}[ht]
\centering
\includegraphics[width=0.4\textwidth]{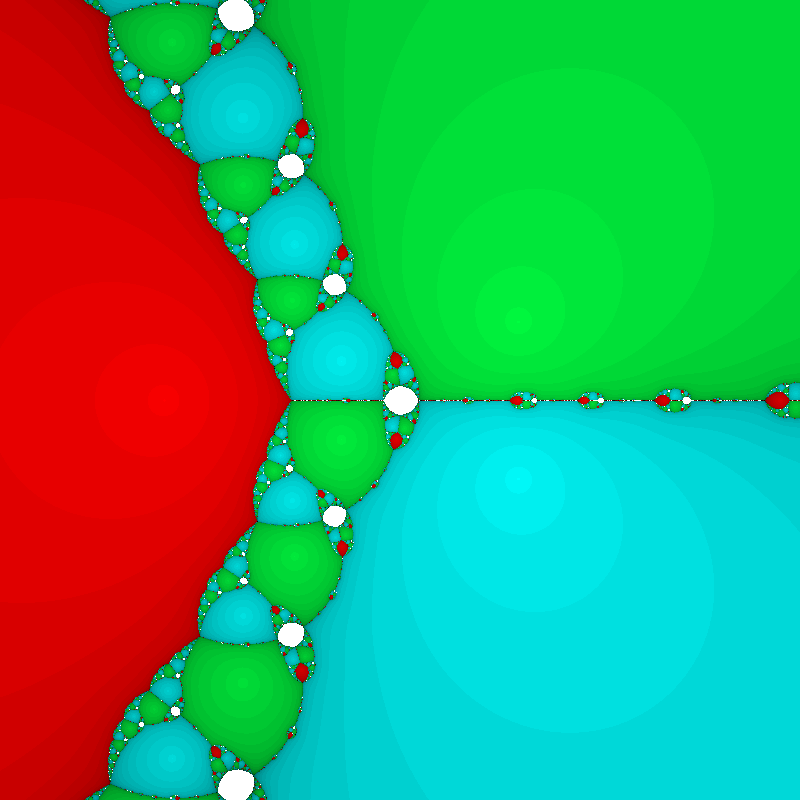}\
\includegraphics[width=0.4\textwidth]{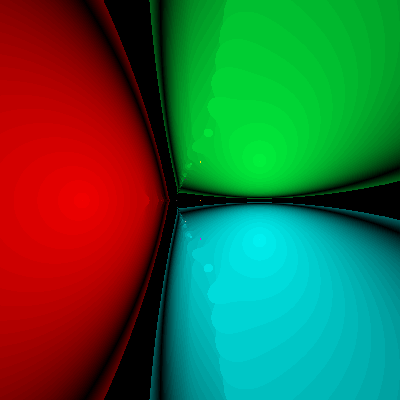}
\caption{Polynomiography of $z^3-2z+2$ using Newton's  method (left) and basic sequence}\label{poly}
\end{figure}

\noindent {\bf Concluding Remarks.}  In this article we have proved a new property of cubic polynomials. Furthermore, we have shown that by  solving a quadratic equation we can generate a sequence that converges to a root of a cubic polynomial.  This in principle justifies the title of the article. While the main result of the article is proving a further property of the ancient problem of root-finding for a cubic equation, the proposed algorithm can also be considered as a practical approach and one that can even be introduced to high school students, making cubic equations and their solutions more tangible. Given one root $\theta$, we can then approximate the other roots via deflation. Any reasonable approximation can be iterated a few times via Newton's iterations to obtain very high accuracy.

Suppose a cubic $p(z)$  has real coefficients. The critical points are either both real, or both complex. If both are complex,  $p(z)$ has one real root.  Then by Theorem \ref{thm2}, case (ii),  each Voronoi cell of a complex root must contain a critical point.  Thus, the algorithm would first find a complex root.  If the critical points are real, $p(z)$ can have three real roots, or one real root and two complex roots. The algorithm would first find a real root.

Generalization of the Voronoi property to polynomials of degree four and higher is an interesting research problem.  We conjecture that with appropriate assumption on distinctness of the roots and critical points
the Voronoi property is valid.  Auckly \cite{Auckly} has shown there is a closed form formula for solving a quartic formula that requires any solution of a related cubic polynomial.  In view of this result and the ones shown here, we can approximate the roots of this cubic equation by solving a quadratic equation, then substitute this into the formulas for solution to a quartic equation in \cite{Auckly}.  If the Voronoi property is valid for quintic polynomials, then in view of the solvability of the general quartic polynomials, and the unsolvability of quintic polynomials via radicals, this together with Theorem \ref{thm3} would give a new algorithm for solving quintic equations.


\bibliographystyle{model1a-num-names}
\bibliography{<your-bib-database>}







{\small

}
\end{document}